\newtheorem{theorem}{Theorem}[section]
\newtheorem{lemma}[theorem]{Lemma}
\newtheorem{cor}[theorem]{Corollary}
\theoremstyle{definition}
\newtheorem{example}[theorem]{Example}
\theoremstyle{remark}
\newtheorem{remark}[theorem]{\bf{Remark}}
\numberwithin{equation}{section}
\begin{document}

	\title [Power numerical radius inequalities]    
	{{ Power numerical radius inequalities from an extension of Buzano's inequality }}
	
	\author[P. Bhunia]{Pintu Bhunia}

	\address{ {Department of Mathematics, Indian Institute of Science, Bengaluru 560012, Karnataka, India}}
	\email{pintubhunia5206@gmail.com; pintubhunia@iisc.ac.in}

	
	\thanks{Dr. Pintu Bhunia  would like to thank SERB, Govt. of India for the financial support in the form of National Post Doctoral Fellowship (N-PDF, File No. PDF/2022/000325) under the mentorship of Professor Apoorva Khare}
	
	\thanks{}
	
	\subjclass[2020]{47A12, 47A30, 15A60}
	\keywords {Numerical radius, Operator norm, Bounded linear operator, Inequality}
	
	\date{\today}
	\maketitle

\begin{abstract}
	Several numerical radius inequalities are studied by developing an extension of the Buzano’s inequality. It is shown  that if $T$ is a bounded linear operator on a complex Hilbert space, then
	\begin{eqnarray*}
	w^n(T) &\leq& \frac{1}{2^{n-1}} w(T^n)+ \sum_{k=1}^{n-1} \frac{1}{2^{k}} \left\|T^k \right\| \left\|T \right\|^{n-k},
\end{eqnarray*}
for every positive integer $n\geq 2.$ This is a non-trivial improvement of the classical inequality $w(T)\leq \|T\|.$ The above inequality gives an estimation for the numerical radius of the nilpotent operators, i.e., if $T^n=0$ for some least positive integer $n\geq 2$, then
\begin{eqnarray*}
	w(T) &\leq& \left(\sum_{k=1}^{n-1} \frac{1}{2^{k}} \left\|T^k \right\| \left\|T \right\|^{n-k}\right)^{1/n}
	\leq \left( 1- \frac{1}{2^{n-1}}\right)^{1/n} \|T\|.
\end{eqnarray*}
  Also, we deduce a reverse inequality for the numerical radius power inequality $w(T^n)\leq w^n(T)$. We show that
 if $\|T\|\leq 1$, then 
 \begin{eqnarray*}
 	w^n(T) &\leq& \frac{1}{2^{n-1}} w(T^n)+  1- \frac{1}{2^{n-1}},
 \end{eqnarray*}
 for every positive integer $n\geq 2.$ This inequality is sharp.
\end{abstract}


\section{\textbf{Introduction}}

\noindent Let $\mathcal{B}(\mathcal{H})$ denote the $C^*$-algebra of all bounded linear operators on a complex Hilbert space $\mathcal{H}$ with usual inner product $\langle.,. \rangle$ and the corresponding norm $\|\cdot\|.$
Let $T\in \mathcal{B}(\mathcal{H})$ and let $|T|=(T^*T)^{1/2}$, where $T^*$ denotes the adjoint of $T.$ The numerical radius and the usual operator norm of $T$ are denoted by $w(T)$ and $\|T\|,$ respectively. The numerical radius of $T$ is defined as $$ w(T)=\sup \left\{|\langle Tx,x\rangle| :   x\in \mathcal{H}, \|x\|=1 \right\}.$$
It is well known that the numerical radius defines a norm on $\mathcal{B}(\mathcal{H})$ and is equivalent to the usual operator norm. Moreover, \text{for every $T\in \mathcal{B}(\mathcal{H})$,} the following inequalities hold:
\begin{eqnarray}\label{eqv}
	\frac12 \|T\| \leq w(T) \leq \|T\|.
\end{eqnarray}
The inequalities are sharp, $w(T)=\frac12 \|T\|$ if $T^2=0$ and $w(T)=\|T\|$ if $T$ is normal. Like as the usual operator norm, the numerical radius satisfies the power inequality, i.e., 
\begin{eqnarray}\label{power}
	w(T^n) \leq w^n(T) 
\end{eqnarray}
 \text{for every positive integer $n.$}
 One other basic property for the numerical radius is the weakly unitary invariant property, i.e., $w(T)=w(U^*TU)$ for every unitary operator $U\in \mathcal{B}(\mathcal{H}).$
To study more interesting properties of the  numerical radius we refer to \cite{book,book2}. 

\noindent The numerical radius has various applications in many branches in sciences, more precisely,  perturbation problem, convergence problem, approximation problem and iterative method as well as recently developed quantum information system. Due to importance of the numerical radius,  many eminent mathematicians have been studied numerical radius inequalities to improve the inequalities \eqref{eqv}. Various inner product inequalities play important role to study numerical radius inequalities. 
The Cauchy-Schwarz inequality is one of the most useful inequality, which states that for every \text{$x,y \in \mathcal{H}$,}
\begin{eqnarray}\label{cauchy}
	|\langle x,y\rangle| &\leq& \|x\| \|y\|.
\end{eqnarray} 
A generalization of the Cauchy-Schwarz inequality is the Buzano's inequality \cite{Buzano}, which states that for $x,y,e \in \mathcal{H}$ with $\|e\|=1,$
\begin{eqnarray}\label{buzinq}
	|\langle x,e\rangle \langle e,y\rangle| &\leq& \frac{ |\langle x,y\rangle|  +\|x\| \|y\|}{2}.
\end{eqnarray}
Another generalization of the Cauchy-Schwarz inequality is the mixed Schwarz inequality \cite[pp. 75--76]{Halmos}, which states that for every $x,y\in \mathcal{H}$ and $T\in \mathcal{B}(\mathcal{H}),$
\begin{eqnarray}\label{mixed}
	|\langle Tx,y\rangle|^2 &\leq& \langle |T|x,x\rangle \langle |T^*|y,y\rangle.
\end{eqnarray}
Using the above inner product inequalities various mathematicians have developed various numerical radius inequalities, which improve the inequalities \eqref{eqv},  see \cite{Bhunia_ASM_2022, Bhunia_LAMA_2022,Bhunia_RIM_2021,Bhunia_BSM_2021,Bhunia_ADM_2021,D08,Kittaneh_2003}. Also using other technique various nice numerical radius inequalities have been studied, see \cite{Abu_RMJM_2015,Bag_MIA_2020,Bhunia_LAA_2021,Bhunia_LAA_2019,Kittaneh_LAMA_2023, Kittaneh_STD_2005, Yamazaki}. Haagerup and Harpe \cite{Haagerup} developed a nice estimation for the numerical radius of the nilpotent operators, i.e., if $T^n=0$ for some positive integer $n\geq 2$, then 
\begin{eqnarray}\label{haag}
	w(T) &\leq&   \cos \left(\frac{\pi}{n+1}  \right) \|T\|.
\end{eqnarray}
\smallskip
\noindent 

In this paper, we obtain a generalization of the Buzano's inequality \eqref{buzinq} and using this generalization we develop new numerical radius inequalities, which improve the existing ones. From the numerical radius inequalities obtained here,  we deduce several results. We deduce an estimation for the nilpotent operators like  \eqref{haag}. Further, we deduce a reverse inequality for the numerical radius power inequality \eqref{power}.

\section{\textbf{Numerical radius inequalities}}
We begin our study with proving the following lemma, which is a generalization of the Buzano's inequality \eqref{buzinq}. 


\begin{lemma}\label{buz-extension}
	Let $x_1,x_2,\ldots,x_n,e \in \mathcal{H}$, where $\|e\|=1$. Then
	\begin{eqnarray*}
		\left| \mathop{\Pi}\limits_{k=1}^n \langle x_k,e\rangle \right| &\leq& \frac{ \left| \langle x_1,x_2\rangle \mathop{\Pi}\limits_{k=3}^n \langle x_k,e\rangle\right| + \mathop{\Pi}\limits_{k=1}^n \|x_k\|}{2}.
	\end{eqnarray*}


\end{lemma}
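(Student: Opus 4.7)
The plan is to isolate the first two factors $\langle x_1,e\rangle$ and $\langle x_2,e\rangle$ in the product, apply the classical Buzano inequality \eqref{buzinq} to them, and then control the remaining factors $\langle x_k,e\rangle$ for $k\geq 3$ by the Cauchy--Schwarz inequality \eqref{cauchy}. The statement is engineered precisely so that one clean application of each of these two tools is enough.

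First I would split the product as
\begin{eqnarray*}
\left|\prod_{k=1}^n \langle x_k,e\rangle\right|
= \bigl|\langle x_1,e\rangle\langle e,x_2\rangle\bigr|\,\left|\prod_{k=3}^n \langle x_k,e\rangle\right|,
\end{eqnarray*}
using the fact that $|\langle x_2,e\rangle|=|\langle e,x_2\rangle|$ since complex conjugation preserves absolute value. This small rewrite is what allows Buzano's inequality \eqref{buzinq} to be applied with $x=x_1$ and $y=x_2$.

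Next I would apply \eqref{buzinq} to obtain
\begin{eqnarray*}
\bigl|\langle x_1,e\rangle\langle e,x_2\rangle\bigr|\;\leq\;\frac{|\langle x_1,x_2\rangle|+\|x_1\|\|x_2\|}{2},
\end{eqnarray*}
multiply both sides by the nonnegative quantity $\bigl|\prod_{k=3}^n\langle x_k,e\rangle\bigr|$, and distribute. The first resulting term collapses into $\bigl|\langle x_1,x_2\rangle\prod_{k=3}^n\langle x_k,e\rangle\bigr|$, which matches the mixed term on the right-hand side of the lemma exactly.

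For the second resulting term I would apply Cauchy--Schwarz \eqref{cauchy} with $\|e\|=1$ to each remaining factor, giving $|\langle x_k,e\rangle|\leq \|x_k\|$ for $k=3,\ldots,n$, so that
\begin{eqnarray*}
\|x_1\|\|x_2\|\left|\prod_{k=3}^n\langle x_k,e\rangle\right|\;\leq\;\prod_{k=1}^n\|x_k\|.
\end{eqnarray*}
Adding the two estimates and dividing by $2$ yields the claimed bound. The only conceptual step is spotting the factorization that exposes a Buzano pair; after that no real obstacle arises, and the argument reduces essentially to one line of inequality manipulation.
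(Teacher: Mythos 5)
Your proof is correct and follows essentially the same route as the paper: the paper applies Buzano's inequality to the pair $x_1,x_2$ after replacing $x_2$ by the scalar multiple $\bigl(\prod_{k=3}^n\langle x_k,e\rangle\bigr)x_2$, which is the same manipulation as your multiplying both sides by $\bigl|\prod_{k=3}^n\langle x_k,e\rangle\bigr|$, and both arguments finish by bounding the leftover factors with $|\langle x_k,e\rangle|\leq\|x_k\|$. No gap.
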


\begin{proof}
	Following the inequality \eqref{buzinq}, we have
	$$ |\langle x_1, e\rangle \langle x_2,e\rangle| \leq \frac{|\langle x_1, x_2\rangle |+ \|x_1\|\|x_2\|}{2}.$$
	By replacing $x_2$ by $ \mathop{\Pi}\limits_{k=3}^n \langle x_k,e\rangle  x_2$
	and using $\left| \mathop{\Pi}\limits_{k=3}^n \langle x_2,e\rangle\right| \leq \mathop{\Pi}\limits_{k=3}^n \| x_k \|$, we obtain the desired inequality. 
\end{proof}




	Now using Lemma \ref{buz-extension} (for $n=3$), we prove the following numerical radius inequality. 
	
\begin{theorem}\label{th1}
	Let $T\in \mathcal{B}(\mathcal{H})$. Then 
	\begin{eqnarray*}
		w(T) &\leq& \sqrt[3]{\frac14 w(T^3) + \frac14\left( \|T^2\|+ \|T^*T+TT^*\|\right) \|T\|}.
	\end{eqnarray*}
\end{theorem}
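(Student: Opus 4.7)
The plan is to apply Lemma~\ref{buz-extension} with $n=3$ to a carefully chosen triple, then apply the standard Buzano inequality~\eqref{buzinq} once more to the mixed term that arises, and finally rewrite the surviving norm factor in terms of the anticommutator $T^{*}T+TT^{*}$.

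Fix a unit vector $x\in\mathcal{H}$. First, I would apply Lemma~\ref{buz-extension} with $n=3$, $e=x$ and $(x_1,x_2,x_3)=(Tx,T^{*}x,Tx)$. Since $\langle T^{*}x,x\rangle=\overline{\langle Tx,x\rangle}$, the product on the left has modulus exactly $|\langle Tx,x\rangle|^{3}$, while $\langle x_1,x_2\rangle=\langle Tx,T^{*}x\rangle=\langle T^{2}x,x\rangle$. This yields
\begin{equation*}
|\langle Tx,x\rangle|^{3}\leq\frac{|\langle T^{2}x,x\rangle|\,|\langle Tx,x\rangle|+\|Tx\|^{2}\|T^{*}x\|}{2}.
\end{equation*}

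Next, to extract $T^{3}$, I would apply the ordinary Buzano inequality~\eqref{buzinq} to the first term on the right. Writing $|\langle Tx,x\rangle|=|\langle x,T^{*}x\rangle|$ and choosing the Buzano triple $(T^{2}x,\,T^{*}x,\,x)$ produces
\begin{equation*}
|\langle T^{2}x,x\rangle|\,|\langle Tx,x\rangle|\leq\frac{|\langle T^{3}x,x\rangle|+\|T^{2}x\|\|T^{*}x\|}{2},
\end{equation*}
and combining the two inequalities gives
\begin{equation*}
|\langle Tx,x\rangle|^{3}\leq\frac{1}{4}|\langle T^{3}x,x\rangle|+\frac{1}{4}\|T^{2}x\|\|T^{*}x\|+\frac{1}{2}\|Tx\|^{2}\|T^{*}x\|.
\end{equation*}

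To convert the residual term into a multiple of $\|T^{*}T+TT^{*}\|\,\|T\|$, I would use $\|Tx\|\leq\|T\|$ together with the AM-GM bound $\|Tx\|\|T^{*}x\|\leq\tfrac{1}{2}(\|Tx\|^{2}+\|T^{*}x\|^{2})=\tfrac{1}{2}\langle(T^{*}T+TT^{*})x,x\rangle\leq\tfrac{1}{2}\|T^{*}T+TT^{*}\|$. Combining with $|\langle T^{3}x,x\rangle|\leq w(T^{3})$ and $\|T^{2}x\|\|T^{*}x\|\leq\|T^{2}\|\|T\|$ and taking the supremum over $\|x\|=1$ produces $w(T)^{3}\leq\tfrac{1}{4}w(T^{3})+\tfrac{1}{4}(\|T^{2}\|+\|T^{*}T+TT^{*}\|)\|T\|$, and a cube root finishes the proof.

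I expect the main obstacle to be pinpointing the right triple for Lemma~\ref{buz-extension}: the symmetric choice $(Tx,Tx,Tx)$ produces only $\|Tx\|^{2}$ in the inner product slot and never exposes $T^{3}$, so one must insert $T^{*}x$ in the middle to force $\langle T^{2}x,x\rangle$ and then Buzano-reduce once more to pull out $T^{3}$. A secondary technical point is resisting the temptation to bound $\|Tx\|^{2}\|T^{*}x\|$ crudely by $\|T\|^{3}$, which would only reproduce the weaker abstract estimate; the anticommutator-based AM-GM step is precisely what replaces the naive $2\|T\|^{2}$ by $\|T^{*}T+TT^{*}\|$.
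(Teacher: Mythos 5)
Your proposal is correct and follows essentially the same route as the paper: apply Lemma~\ref{buz-extension} with $n=3$ to a triple whose product of inner products against $x$ equals $|\langle Tx,x\rangle|^{3}$, Buzano-reduce the resulting $\langle T^{2}x,x\rangle$ term to expose $T^{3}$, and convert the leftover norm product into $\tfrac14\|T^{*}T+TT^{*}\|\,\|T\|$ via AM--GM. The only (immaterial) difference is that the paper takes the triple $(Tx,T^{*}x,T^{*}x)$, leaving the residual $\|Tx\|\,\|T^{*}x\|^{2}$, whereas you take $(Tx,T^{*}x,Tx)$ and handle $\|Tx\|^{2}\|T^{*}x\|$; both yield the identical final bound.
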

\begin{proof}
	Take $x\in \mathcal{H}$ and $\|x\|=1.$ From Lemma \ref{buz-extension} (for $n=3$), we have
	\begin{eqnarray*}
		|\langle Tx,x\rangle |^3 &=& |\langle Tx,x\rangle\langle T^*x,x\rangle\langle T^*x,x\rangle|\\
		&\leq& \frac{|\langle Tx,T^*x\rangle \langle T^*x,x\rangle|+ \|Tx\| \|T^*x\|^2}{2}\\
		&=& \frac{|\langle T^2x,x\rangle \langle T^*x,x\rangle|}{2}+ \frac{(\|Tx\|^2+ \|T^*x\|^2)\|T^*x\|}{4}
	\end{eqnarray*}
Also, from the Buzano's inequality \eqref{buzinq}, we have
\begin{eqnarray*}
	|\langle T^2x,x\rangle \langle T^*x,x\rangle| &\leq& \frac{|\langle T^2x,T^*x\rangle|+ \|T^2x\| \|T^*x\|   }{2}\\
	 &=& \frac{|\langle T^3x,x\rangle|+ \|T^2x\| \|T^*x\|   }{2}.
\end{eqnarray*}
Therefore, 
	\begin{eqnarray*}
	|\langle Tx,x\rangle |^3 
	&\leq&  \frac{|\langle T^3x,x\rangle|+ \|T^2x\| \|T^*x\|}{4} + \frac{(\|Tx\|^2+ \|T^*x\|^2)\|T^*x\|}{4}\\
	&\leq& \frac14 w(T^3)+ \frac14 \|T^2\| \|T\|+\frac14 \|T^*T+TT^*  \| \|T \|.
	\end{eqnarray*}
Therefore, taking supremum over $\|x\|=1,$ we get the desired inequality.
\end{proof}

The inequality in Theorem \ref{th1} is an improvement of the second inequality in \eqref{eqv}, since $w(T^3)\leq \|T^3\|\leq \|T\|^3$, $\|T^2\|\leq \|T\|^2$ and $\|T^*T+TT^*\|\leq 2\|T\|^2.$ To show non-trivial improvement, we consider a matrix $T=\begin{bmatrix} 
		0&2&0\\
		0&0&1\\
		0&0&0
	\end{bmatrix}.$ Then $w(T^3)=0$ and so 
$$  \sqrt[3]{  \frac14 w(T^3) + \frac14\left( \|T^2\|+ \|T^*T+TT^*\|\right) \|T\| } < \|T\|.$$

Next, using the Buzano's inequality \eqref{buzinq}, we obtain the following numerical radius inequality.

\begin{theorem}\label{th2}
	Let $T\in \mathcal{B}(\mathcal{H}).$ Then 
	\begin{eqnarray*}
		w(T) &\leq& \sqrt[3]{\frac12 w(TT^*T)+\frac14 \|T^*T+TT^*\| \|T\|}.
	\end{eqnarray*}
\end{theorem}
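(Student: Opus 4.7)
The plan is to mirror the pattern of Theorem \ref{th1}'s proof, but to arrange $|\langle Tx,x\rangle|^3$ so that Buzano's inequality \eqref{buzinq} directly produces the quantity $\langle TT^*Tx, x\rangle$. Fix a unit vector $x\in\mathcal{H}$. The first step is to use the Cauchy--Schwarz inequality \eqref{cauchy} to write $|\langle Tx,x\rangle|=|\langle x,T^*x\rangle|\le \|T^*x\|$, so that $|\langle Tx,x\rangle|^3 \le |\langle Tx,x\rangle|\,\|T^*x\|^2$. Since $\|T^*x\|^2=\langle TT^*x,x\rangle$ is a nonnegative real number, this factor equals $|\langle x,TT^*x\rangle|$, giving
\[
|\langle Tx,x\rangle|^3 \;\le\; \bigl|\langle Tx,x\rangle\,\langle x,TT^*x\rangle\bigr|.
\]

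Next I would apply Buzano's inequality \eqref{buzinq} with the choice $e=x$, $x_1=Tx$, and $y=TT^*x$, which yields
\[
\bigl|\langle Tx,x\rangle\,\langle x,TT^*x\rangle\bigr| \;\le\; \frac{|\langle Tx,TT^*x\rangle|+\|Tx\|\,\|TT^*x\|}{2}.
\]
The key identification is $\langle Tx,TT^*x\rangle = \langle T^*Tx,T^*x\rangle = \langle TT^*Tx,x\rangle$, which is the source of the $w(TT^*T)$ term. For the remaining piece, I would bound $\|TT^*x\|\le \|T\|\,\|T^*x\|$ and then apply the AM--GM estimate $\|Tx\|\,\|T^*x\|\le \tfrac{1}{2}(\|Tx\|^2+\|T^*x\|^2)$, exactly as in the proof of Theorem \ref{th1}. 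Combining everything gives
\[
|\langle Tx,x\rangle|^3 \;\le\; \tfrac12 |\langle TT^*Tx,x\rangle| + \tfrac{\|T\|}{4}\bigl(\|Tx\|^2+\|T^*x\|^2\bigr).
\]

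Finally, I would use the identity $\|Tx\|^2+\|T^*x\|^2 = \langle(T^*T+TT^*)x,x\rangle \le \|T^*T+TT^*\|$ for unit $x$, and then take the supremum over all unit vectors to obtain $w^3(T)\le \tfrac12 w(TT^*T)+\tfrac14\|T^*T+TT^*\|\,\|T\|$, from which the conclusion follows by extracting a cube root.

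I do not anticipate any serious obstacle; the only subtle choice is noticing that by first using Cauchy--Schwarz to rewrite $|\langle Tx,x\rangle|^2$ as $\|T^*x\|^2$, the inner expression $\langle x,TT^*x\rangle$ is exactly what makes Buzano's inequality produce $\langle TT^*Tx,x\rangle$ rather than the $\langle T^2 x,x\rangle$ that naturally appears in Theorem \ref{th1}. The remaining manipulations (AM--GM and the $\|T^*T+TT^*\|$ bound) are the same routine steps used in the previous theorem.
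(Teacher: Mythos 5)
Your proof is correct and follows essentially the same route as the paper: the paper bounds $|\langle Tx,x\rangle|^3\le \|Tx\|^2|\langle T^*x,x\rangle|=|\langle |T|^2x,x\rangle\langle T^*x,x\rangle|$ and applies Buzano to the pair $(|T|^2x,\,T^*x)$, whereas you bound it by $\|T^*x\|^2|\langle Tx,x\rangle|=|\langle Tx,x\rangle\langle x,TT^*x\rangle|$ and apply Buzano to $(Tx,\,TT^*x)$ --- a dual arrangement that produces the identical cross-term $\langle TT^*Tx,x\rangle$ and the identical norm bound $\|T\|\,\|Tx\|\,\|T^*x\|$. All subsequent steps (AM--GM and the $\|T^*T+TT^*\|$ estimate) coincide with the paper's.
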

\begin{proof}
	Take $x\in \mathcal{H}$ and $\|x\|=1.$ By the Cauchy-Schwarz inequality \eqref{cauchy}, we have
	\begin{eqnarray*}
		|\langle Tx,x\rangle |^3 &\leq& \|Tx\|^2|\langle T^*x,x\rangle|
		= |\langle |T|^2x,x\rangle \langle T^*x,x\rangle|.
	\end{eqnarray*}
From the Buzano's inequality \eqref{buzinq}, we have
\begin{eqnarray*}
	|\langle Tx,x\rangle |^3 
	&\leq& \frac{|\langle |T|^2x,T^*x\rangle|+ \||T|^2x\| \|T^*x\|}{2}\\
	&\leq& \frac{|\langle T|T|^2x,x\rangle |+ \||T|^2x\| \|T^*x\|}{2}\\
	&\leq& \frac{|\langle T|T|^2x,x\rangle |+ \|T\| \|Tx\| \|T^*x\|}{2}\\
	&\leq& \frac{|\langle T|T|^2x,x\rangle |}{2}+\frac{  (\|Tx\|^2+ \|T^*x\|^2)\|T\|}{4}\\
	&\leq& \frac12 w(T|T|^2)+ \frac14 \|T^*T+TT^*\| \|T\|.
\end{eqnarray*}
Therefore, taking supremum over $\|x\|=1,$ we get the desired inequality.
\end{proof}

Clearly, for every $T\in \mathcal{B}(\mathcal{H})$, $$\sqrt[3]{\frac12 w(TT^*T)+\frac14 \|T^*T+TT^*\| \|T\|}\leq \sqrt[3]{\frac12 w(TT^*T)+\frac12 \|T\|^3} \leq \|T\|.$$


Also, using similar technique as Theorem \ref{th2}, we can prove the following numerical radius inequality.

\begin{eqnarray}\label{eqn5}
	w(T)&\leq& \sqrt[3]{\frac12 w(T^*T^2)+ \frac12 \|T\|^3}.
\end{eqnarray}
And also replacing $T$ by $T^*$ in  \eqref{eqn5}, we get
\begin{eqnarray}\label{eqn6}
	w(T)&\leq& \sqrt[3]{\frac12 w(T^2T^*)+ \frac12 \|T\|^3}.
\end{eqnarray}

Consider a matrix 
$T=\begin{bmatrix} 
	0&2&0\\
	0&0&1\\
	0&0&0
\end{bmatrix}$. Then we see that $$ w(T^2 T^*)=1< w(T^*T^2)=2< w(TT^*T)=\sqrt{65}/2 .$$

Therefore, combining Theorem \ref{th2} and the inequalities \eqref{eqn5} and \eqref{eqn6}, we obtain the following corollary.

\begin{cor}\label{cor2}
	Let $T\in \mathcal{B}(\mathcal{H})$. Then 
\begin{eqnarray*}
	w(T) &\leq& \sqrt[3]{\frac12 \min \Big(w(TT^*T), w(T^2T^*), w(T^*T^2)  \Big) + \frac12  \|T\|^3}.
\end{eqnarray*}
\end{cor}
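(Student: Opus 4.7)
The plan is to observe that Corollary \ref{cor2} is nothing more than the conjunction of the three bounds on $w(T)$ already in hand: Theorem \ref{th2}, the inequality \eqref{eqn5}, and the inequality \eqref{eqn6}. The only mismatch is cosmetic: Theorem \ref{th2} has $\tfrac14\|T^*T+TT^*\|\,\|T\|$ in place of $\tfrac12\|T\|^3$, whereas \eqref{eqn5} and \eqref{eqn6} already use the $\tfrac12\|T\|^3$ form. So the first order of business is to reconcile these.

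First I would weaken Theorem \ref{th2} using the triangle inequality $\|T^*T+TT^*\|\le \|T^*T\|+\|TT^*\|=2\|T\|^2$, which immediately gives
\[
w(T)\;\le\;\sqrt[3]{\tfrac12 w(TT^*T)+\tfrac12\|T\|^3}.
\]
This is precisely the same shape as \eqref{eqn5} and \eqref{eqn6} but with $w(TT^*T)$ in the role of the middle term. Since $t\mapsto \sqrt[3]{\tfrac12 t+\tfrac12\|T\|^3}$ is monotonically increasing in $t\ge 0$, taking the smallest of the three bounds amounts to replacing the middle term by the smallest of $w(TT^*T)$, $w(T^2T^*)$, and $w(T^*T^2)$, which yields the claimed inequality.

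There is no real obstacle here: the argument is a straightforward combination of results already established, plus the elementary norm bound $\|T^*T+TT^*\|\le 2\|T\|^2$ and monotonicity of the cube root. I would simply write the proof as a one-sentence deduction referencing Theorem \ref{th2} and \eqref{eqn5}, \eqref{eqn6}, with the weakening step spelled out explicitly so the reader can see why the $\tfrac14\|T^*T+TT^*\|\,\|T\|$ term from Theorem \ref{th2} may be replaced by $\tfrac12\|T\|^3$ without breaking the bound.
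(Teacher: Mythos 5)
Your proposal is correct and follows the same route as the paper: the paper likewise derives Corollary \ref{cor2} by combining Theorem \ref{th2} with \eqref{eqn5} and \eqref{eqn6}, having already noted (right after Theorem \ref{th2}) the weakening $\frac14\|T^*T+TT^*\|\,\|T\|\le\frac12\|T\|^3$ that you spell out. Nothing is missing.
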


\smallskip

Since
$w(TT^*T)\leq \|T\|^3, \ w(T^2T^*)\leq \|T^2\| \|T\|\leq \|T\|^3,\ w(T^*T^2)\leq \|T^2\| \|T\|\leq \|T\|^3,$
 the inequality in Corollary \ref{cor2} is an improvement of the second inequality in \eqref{eqv}.

\smallskip

Now, Theorem \ref{th2} together with the inequalities \eqref{eqn5} and \eqref{eqn6} implies the following result.

\begin{cor}\label{coor2}
	Let $T\in \mathcal{B}(\mathcal{H})$. If $w(T)=\|T\|,$ then 
	\begin{eqnarray*}
		w(TT^*T)= w(T^2T^*)= w(T^*T^2)= \|T\|^3.
	\end{eqnarray*}
\end{cor}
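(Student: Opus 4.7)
The plan is to read off the three equalities directly from the three inequalities already in hand, namely Theorem \ref{th2}, \eqref{eqn5}, and \eqref{eqn6}, by substituting the hypothesis $w(T)=\|T\|$ and squeezing from both sides.

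First I would handle $w(TT^*T)$. Cube Theorem \ref{th2} to get
\[
w(T)^3 \;\leq\; \tfrac12 w(TT^*T) + \tfrac14 \|T^*T+TT^*\|\,\|T\|.
\]
Then I would use the elementary bound $\|T^*T+TT^*\|\leq 2\|T\|^2$, substitute $w(T)=\|T\|$, and rearrange to obtain $\|T\|^3 \leq w(TT^*T)$. The reverse bound $w(TT^*T)\leq \|TT^*T\|\leq \|T\|^3$ is immediate, so equality follows.

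Next, for $w(T^*T^2)$ I would cube \eqref{eqn5}, getting $\|T\|^3 \leq \tfrac12 w(T^*T^2) + \tfrac12 \|T\|^3$, which rearranges to $\|T\|^3 \leq w(T^*T^2)$; combined with $w(T^*T^2)\leq \|T^*T^2\|\leq \|T\|^3$ this gives the middle equality. The third equality for $w(T^2T^*)$ is the same argument applied to \eqref{eqn6}.

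There is no real obstacle here; the corollary is an immediate consequence of the three preceding estimates, with the only minor point being the use of $\|T^*T+TT^*\|\leq 2\|T\|^2$ in the first case to convert the coefficient $\tfrac14$ into an effective $\tfrac12$ so the $\tfrac12\|T\|^3$ on the right can be absorbed into the $\|T\|^3$ on the left.
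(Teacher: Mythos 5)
Your proposal is correct and is exactly the argument the paper intends: the paper gives no written proof beyond the remark that the corollary follows from Theorem \ref{th2}, \eqref{eqn5}, and \eqref{eqn6}, and your cubing-and-squeezing derivation (including the use of $\|T^*T+TT^*\|\leq 2\|T\|^2$ in the first case) fills in precisely those details.
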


Next, by using Lemma \ref{buz-extension} (for $n=3$), the Buzano's inequality \eqref{buzinq} and the mixed Schwarz inequality \eqref{mixed}, we obtain the following result.

\begin{theorem}\label{th3}
	Let $T\in \mathcal{B}(\mathcal{H}).$ Then
	\begin{eqnarray*}
		w(T) &\leq& \sqrt[3]{\frac14 w(|T|T|T^*|)+ \frac14 \Big (  \|T^2\|+ \|T^*T+TT^*\| \Big)\|T\|}.
	\end{eqnarray*}
\end{theorem}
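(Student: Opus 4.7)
The plan is to mirror the structure of Theorem \ref{th1}, but to inject the mixed Schwarz inequality \eqref{mixed} at the very outset so that the triple product to which Lemma \ref{buz-extension} is applied involves $|T|x$, $Tx$ and $|T^*|x$ rather than just $Tx$ and $T^*x$; this is what will ultimately produce the compound operator $|T|T|T^*|$ in the final bound.

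Fix a unit vector $x\in\mathcal{H}$. First I would apply the mixed Schwarz inequality \eqref{mixed} to get $|\langle Tx,x\rangle|^2\leq\langle|T|x,x\rangle\langle|T^*|x,x\rangle$, multiply by $|\langle Tx,x\rangle|$, and use nonnegativity of the two real factors on the right to rewrite this as
\[
|\langle Tx,x\rangle|^3\leq|\langle|T|x,x\rangle\langle Tx,x\rangle\langle|T^*|x,x\rangle|.
\]
Next I would apply Lemma \ref{buz-extension} (with $n=3$) choosing $x_1=|T|x$, $x_2=Tx$, $x_3=|T^*|x$ and $e=x$. Using $\langle|T|x,Tx\rangle=\langle T^*|T|x,x\rangle$ together with $\||T|x\|=\|Tx\|$ and $\||T^*|x\|=\|T^*x\|$, this yields
\[
|\langle Tx,x\rangle|^3\leq\frac{|\langle T^*|T|x,x\rangle\langle|T^*|x,x\rangle|+\|Tx\|^2\|T^*x\|}{2}.
\]

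Now I would apply Buzano's inequality \eqref{buzinq} to the first term, taking $a=T^*|T|x$, $b=|T^*|x$ and $e=x$. The key algebraic identity is
\[
\langle T^*|T|x,|T^*|x\rangle=\langle x,(T^*|T|)^*|T^*|x\rangle=\langle x,|T|T|T^*|x\rangle,
\]
whose modulus equals $|\langle|T|T|T^*|x,x\rangle|$. This delivers the desired numerical-radius term and gives
\[
|\langle Tx,x\rangle|^3\leq\frac{|\langle|T|T|T^*|x,x\rangle|}{4}+\frac{\|T^*|T|x\|\|T^*x\|}{4}+\frac{\|Tx\|^2\|T^*x\|}{2}.
\]

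Finally I would take the supremum over $\|x\|=1$. The first term contributes $\tfrac14 w(|T|T|T^*|)$; the third, after $\|Tx\|^2\|T^*x\|\leq\|T\|\cdot\tfrac12(\|Tx\|^2+\|T^*x\|^2)=\tfrac12\|T\|\langle(T^*T+TT^*)x,x\rangle$, contributes $\tfrac14\|T^*T+TT^*\|\|T\|$. The subtle step is the middle term: the naive bound $\|T^*|T|x\|\leq\|T^*\|\||T|x\|=\|T\|\|Tx\|$ only recovers $\|T\|^2$, not $\|T^2\|$. Instead, the $C^*$-identity gives
\[
\|T^*|T|\|^2=\|(T^*|T|)(T^*|T|)^*\|=\|T^*|T|^2T\|=\|(T^2)^*T^2\|=\|T^2\|^2,
\]
so $\|T^*|T|x\|\|T^*x\|\leq\|T^*|T|\|\|T\|=\|T^2\|\|T\|$ for $\|x\|=1$, contributing $\tfrac14\|T^2\|\|T\|$. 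Collecting the three contributions and taking the cube root yields the stated inequality. The main obstacle is spotting the operator-norm identity $\|T^*|T|\|=\|T^2\|$; without it, the factor $\|T^2\|$ in the statement is unreachable and one only obtains a weaker bound with $\|T\|^2$ in its place.
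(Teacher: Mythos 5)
Your proposal is correct and follows essentially the same route as the paper's proof: mixed Schwarz to set up the triple product, Lemma \ref{buz-extension} with $n=3$, then Buzano's inequality on the surviving pair, an AM--GM step for the remainder, and the norm identity identifying $\|T^2\|$. The only (immaterial) differences are that you contract the pair $|T|x,\,Tx$ first where the paper contracts $|T^*|x,\,T^*x$ — so you need $\|T^*|T|\|=\|T^2\|$ instead of $\|T|T^*|\|=\|T^2\|$ — and you verify that identity via the $C^*$-identity rather than the polar decomposition; both are valid.
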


\begin{proof}
	Let $x\in \mathcal{H}$ with $\|x\|=1.$ From the mixed Schwarz inequality \eqref{mixed}, we have
	\begin{eqnarray*}
		|\langle Tx,x\rangle|^3 &\leq & \langle |T^*|x,x\rangle |\langle T^*x,x\rangle| \langle |T|x,x\rangle. 
	\end{eqnarray*}
Using Lemma \ref{buz-extension} (for $n=3$), we have
\begin{eqnarray*}
	|\langle Tx,x\rangle|^3 &\leq & \frac{|\langle |T^*|x,T^*x\rangle  \langle |T|x,x\rangle | + \||T^*|x\| \|T^*x\| \||T|x\| } {2}\\
	&= & \frac{|\langle T |T^*|x,x\rangle | \langle |T|x,x\rangle  + \||T^*|x\| \|T^*x\| \||T|x\| } {2}. 
\end{eqnarray*}
	By Buzano's inequality \eqref{buzinq}, we have
\begin{eqnarray*}
	 |\langle T |T^*|x,x\rangle | \langle |T|x,x\rangle  
	 &\leq& \frac{ | \langle T |T^*|x,|T|x\rangle |+ \|T |T^*|x\| \||T|x\|  }{2}\\
	 &=& \frac{| \langle |T| T |T^*|x, x\rangle |+ \|T |T^*|x\| \||T|x\|  }{2}.\\
	\end{eqnarray*}
Also by AM-GM inequality, we have
\begin{eqnarray*}
	 \||T^*|x\| \|T^*x\| \||T|x\| &\leq& \frac12(\||T^*|x\|^2 + \||T|x\|^2)  \|T^*x\|\\
	 &=& \frac12 \langle (|T|^2+|T^*|^2)x,x\rangle \|T^*x\|.
\end{eqnarray*}
Therefore, 
\begin{eqnarray*}
	|\langle Tx,x\rangle|^3 &\leq & \frac{ |\langle |T| T |T^*|x, x\rangle |+ \|T |T^*|x\| \||T|x\|  }{4}  +  \frac14 \langle (|T|^2+|T^*|^2)x,x\rangle \|T^*x\|\\
	&\leq& \frac14 w(|T|T|T^*|)+ \frac14 \Big (  \|T |T^*|\|+ \|T^*T+TT^*\| \Big)\|T\|. 
\end{eqnarray*}
From the polar decomposition $T^*=U|T^*|$, it is easy to verify that  $\|T |T^*|\|=\|T^2\|.$ So,
\begin{eqnarray*}
	|\langle Tx,x\rangle|^3 
	&\leq& \frac14 w(|T|T|T^*|)+ \frac14 \Big (  \|T^2 \|+ \|T^*T+TT^*\| \Big)\|T\|. 
\end{eqnarray*}
Therefore, taking supremum over $\|x\|=1,$ we get the desired result.
\end{proof}




Now, combining Theorem \ref{th3} and Theorem  \ref{th1}, we obtain the following corollary.

\begin{cor}\label{cor3}
	Let $T\in \mathcal{B}(\mathcal{H}),$ then
	\begin{eqnarray*}
		w(T) \leq \sqrt[3]{\frac14 \min \Big( w(T^3), w(|T|T|T^*|) \Big)+ \frac14 \Big (  \|T^2\|+ \|T^*T+TT^*\| \Big)\|T\|}.
	\end{eqnarray*}
\end{cor}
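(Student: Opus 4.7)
The plan is essentially a one-line observation, since Theorems \ref{th1} and \ref{th3} are already in hand. Both theorems give an upper bound for $w(T)$ of the same structural form: a cube root whose radicand is the sum of $\frac14 A$ and the \emph{identical} quantity $\frac14 \big(\|T^2\|+\|T^*T+TT^*\|\big)\|T\|$, where $A=w(T^3)$ in Theorem \ref{th1} and $A=w(|T|T|T^*|)$ in Theorem \ref{th3}.

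First I would record both inequalities side by side. Since $w(T)$ is simultaneously bounded above by each of the two quantities, it is bounded above by their minimum. Because the cube root function is monotonically increasing on $[0,\infty)$, the minimum of the two cube roots equals the cube root of the minimum of the two radicands; and because the additive term $\frac14\big(\|T^2\|+\|T^*T+TT^*\|\big)\|T\|$ is common to both radicands, pulling it outside the $\min$ gives
\begin{eqnarray*}
 \min\!\Big\{\tfrac14 w(T^3),\,\tfrac14 w(|T|T|T^*|)\Big\} + \tfrac14\big(\|T^2\|+\|T^*T+TT^*\|\big)\|T\| \\
 = \tfrac14\min\!\big(w(T^3),\,w(|T|T|T^*|)\big) + \tfrac14\big(\|T^2\|+\|T^*T+TT^*\|\big)\|T\|.
\end{eqnarray*}
Taking the cube root then yields exactly the asserted inequality.

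There is no real obstacle here; the corollary is a direct combination of the two preceding theorems, made clean by the fact that their ``remainder'' terms coincide. If I wanted to make the deduction slightly more formal, I would simply write $w(T)\le \min\{B_1,B_2\}$ where $B_1,B_2$ denote the right-hand sides of Theorems \ref{th1} and \ref{th3}, and then use monotonicity of $t\mapsto t^{1/3}$ together with the common summand to bring the minimum inside the radical. No new analytic ingredient is required.
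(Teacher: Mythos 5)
Your argument is exactly the paper's: the corollary is stated there as an immediate consequence of "combining Theorem \ref{th1} and Theorem \ref{th3}," and your observation that the two bounds share the same remainder term, so the minimum can be taken inside the radical, is precisely the intended (and correct) deduction. Nothing is missing.
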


Using similar technique as Theorem \ref{th3}, we can also prove the following inequality. 
\begin{eqnarray}\label{eqn7}
	w(T) &\leq& \sqrt[3]{\frac14 w(|T^*|T|T|)+ \frac38    \|T^*T+TT^*\| \|T\|}.
\end{eqnarray}

Clearly, the inequalities in Corollary \ref{cor3} and \eqref{eqn7} are stronger than the second inequality in \eqref{eqv}. 

And when $w(T)=\|T\|,$ then 
$$ w(|T|T|T^*|)=  w(|T^*|T|T|) =w(T^3)=\|T\|^3.$$

Next theorem reads as follows:

\begin{theorem}\label{th4}
		Let $T\in \mathcal{B}(\mathcal{H}).$ Then
	\begin{eqnarray*}
		w(T) &\leq&   \sqrt[4]{ \Big( \frac12 w(T|T|) +\frac14  \|T^*T+TT^*\|   \Big)   \Big ( \frac12 w(T^*|T^*|)+ \frac14\|T^*T+TT^*\| \Big)}.
	\end{eqnarray*}
\end{theorem}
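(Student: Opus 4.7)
The plan is to mimic the structure of Theorem~\ref{th2}/\ref{th3}: start from the mixed Schwarz inequality to get the square of $|\langle Tx,x\rangle|$ split as $\langle |T|x,x\rangle\langle |T^*|x,x\rangle$, then apply Buzano's inequality twice (once on each factor, paired with $\langle Tx,x\rangle$), multiply the resulting bounds, and finally invoke AM--GM to convert the cross term $\|Tx\|\|T^*x\|$ into $\tfrac12\|T^*T+TT^*\|$.

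More precisely, fix a unit vector $x\in\mathcal{H}$. First I would apply the Buzano inequality \eqref{buzinq} with the triple $(a,b,e)=(|T|x, T^*x, x)$. Using self-adjointness of $|T|$, the identity $\langle |T|x,T^*x\rangle=\langle T|T|x,x\rangle$, and the standard fact $\||T|x\|=\|Tx\|$, this gives
\begin{eqnarray*}
\langle |T|x,x\rangle\,|\langle Tx,x\rangle| &\leq& \tfrac12\,|\langle T|T|x,x\rangle|+\tfrac12\,\|Tx\|\,\|T^*x\|.
\end{eqnarray*}
Replacing $T$ by $T^*$ (and noting $|\langle T^*x,x\rangle|=|\langle Tx,x\rangle|$) yields the companion estimate
\begin{eqnarray*}
\langle |T^*|x,x\rangle\,|\langle Tx,x\rangle| &\leq& \tfrac12\,|\langle T^*|T^*|x,x\rangle|+\tfrac12\,\|Tx\|\,\|T^*x\|.
\end{eqnarray*}

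Next I would multiply these two inequalities. The left-hand side becomes $\langle |T|x,x\rangle\langle |T^*|x,x\rangle\,|\langle Tx,x\rangle|^2$, which by the mixed Schwarz inequality \eqref{mixed} dominates $|\langle Tx,x\rangle|^4$. On the right, I would use AM--GM in the form
\begin{eqnarray*}
\|Tx\|\,\|T^*x\| &\leq& \tfrac12\bigl(\|Tx\|^2+\|T^*x\|^2\bigr)=\tfrac12\langle(T^*T+TT^*)x,x\rangle\leq\tfrac12\|T^*T+TT^*\|,
\end{eqnarray*}
together with $|\langle T|T|x,x\rangle|\leq w(T|T|)$ and $|\langle T^*|T^*|x,x\rangle|\leq w(T^*|T^*|)$, to bound each bracketed factor by the desired quantities. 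Taking the supremum over $\|x\|=1$ and extracting a fourth root then yields the theorem.

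The only delicate point is making sure the Buzano application produces $T|T|$ rather than $|T|T$, which is why the precise choice $(a,b,e)=(|T|x,T^*x,x)$ (so that the inner product $\langle |T|x,T^*x\rangle$ reshuffles to $\langle T|T|x,x\rangle$) matters; after that, the rest is bookkeeping and a single use of AM--GM. No further technical obstacle is expected.
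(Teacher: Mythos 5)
Your proposal is correct and follows essentially the same route as the paper: mixed Schwarz to produce $\langle |T|x,x\rangle\langle |T^*|x,x\rangle$, Buzano applied to each of the two pairings with $\langle Tx,x\rangle$ (yielding $w(T|T|)$ and $w(T^*|T^*|)$), and AM--GM to absorb $\|Tx\|\|T^*x\|$ into $\tfrac12\|T^*T+TT^*\|$. The only difference is cosmetic: you multiply the two Buzano estimates and then invoke mixed Schwarz on the product, whereas the paper invokes mixed Schwarz first and then bounds the two factors separately.
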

\begin{proof}
	Let $x\in \mathcal{H}$ with $\|x\|=1.$ From the mixed Schwarz inequality \eqref{mixed}, we have
	$$ |\langle Tx,x\rangle|^2 \leq \langle |T|x,x\rangle \langle |T^*|x,x\rangle.$$
	Therefore, 	$$ |\langle Tx,x\rangle|^4 \leq \langle |T|x,x\rangle \langle |T^*|x,x\rangle |\langle Tx,x\rangle \langle T^*x,x\rangle|.$$
	From the Buzano's inequality \eqref{buzinq}, we have
	\begin{eqnarray*}
		 |\langle |T|x,x \rangle \langle T^*x,x\rangle |
		  &\leq& \frac{|\langle |T|x, T^*x \rangle|+ \||T|x\| \|T^*x\|}{2}\\
		 &\leq&  \frac12 |\langle T|T|x, x \rangle|+ \frac14  (\||T|x\|^2+ \|T^*x\|^2)\\
		 &\leq& \frac12 w(T|T|)+ \frac14 \|T^*T+TT^*\|.
	\end{eqnarray*}
Similarly, 	\begin{eqnarray*}
	|\langle |T^*|x,x \rangle \langle Tx,x\rangle |
	&\leq& \frac{|\langle |T^*|x, Tx \rangle|+ \||T^*|x\| \|Tx\|}{2}\\
	&\leq&  \frac12 |\langle T^*|T^*|x, x \rangle|+ \frac14  (\||T^*|x\|^2+ \|Tx\|^2)\\
	&\leq& \frac12 w(T^*|T^*|)+ \frac14 \|T^*T+TT^*\|.
\end{eqnarray*}
Therefore, $$|\langle Tx,x\rangle|^4 \leq \left(\frac12 w(T|T|)+ \frac14 \|T^*T+TT^*\|\right) \left( \frac12 w(T^*|T^*|)+ \frac14 \|T^*T+TT^*\|\right).$$
Taking supremum over $\|x\|=1,$ we get
$$w^4(T) \leq \left(\frac12 w(T|T|)+ \frac14 \|T^*T+TT^*\|\right) \left( \frac12 w(T^*|T^*|)+ \frac14 \|T^*T+TT^*\|\right),$$ as desired.
\end{proof}

Again, using similar technique as Theorem \ref{th4}, we can prove the following result.

\begin{theorem}\label{th5}
	Let $T\in \mathcal{B}(\mathcal{H}).$ Then
	\begin{eqnarray*}
		w(T) &\leq&   \sqrt[4]{ \Big( \frac12 w(T|T^*|) +\frac12  \|T\|^2   \Big)   \Big ( \frac12 w(T^*|T|)+ \frac12\|T\|^2 \Big)}.
	\end{eqnarray*}
\end{theorem}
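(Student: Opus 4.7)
The plan is to follow exactly the template of Theorem \ref{th4}, but to regroup the four inner-product factors so that Buzano's inequality produces the operators $T|T^*|$ and $T^*|T|$ appearing in the statement, rather than $T|T|$ and $T^*|T^*|$. Fix a unit vector $x\in\mathcal{H}$ and start from the mixed Schwarz inequality \eqref{mixed}, $|\langle Tx,x\rangle|^{2}\le\langle |T|x,x\rangle\langle |T^*|x,x\rangle$. Using $\langle T^*x,x\rangle=\overline{\langle Tx,x\rangle}$ to rewrite $|\langle Tx,x\rangle|^{2}=|\langle Tx,x\rangle\langle T^*x,x\rangle|$, I obtain
\[
|\langle Tx,x\rangle|^{4}\le\bigl(\langle |T^*|x,x\rangle\,|\langle T^*x,x\rangle|\bigr)\bigl(\langle |T|x,x\rangle\,|\langle Tx,x\rangle|\bigr).
\]
The crucial new choice is to pair $|T^*|$ with $T^*$ (not with $T$ as in Theorem \ref{th4}), and $|T|$ with $T$; this is what forces $T|T^*|$ and $T^*|T|$ to appear after Buzano is applied.

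Next I apply Buzano's inequality \eqref{buzinq} to each bracket. Using $\langle |T^*|x,T^*x\rangle=\langle T|T^*|x,x\rangle$ and $\langle |T|x,Tx\rangle=\langle T^*|T|x,x\rangle$, Buzano gives bounds of the form $\tfrac12|\langle T|T^*|x,x\rangle|+\tfrac12\||T^*|x\|\|T^*x\|$ for the first bracket and $\tfrac12|\langle T^*|T|x,x\rangle|+\tfrac12\||T|x\|\|Tx\|$ for the second. At this point I would invoke the identities $\||T|x\|=\|Tx\|$ and $\||T^*|x\|=\|T^*x\|$ (which follow from $|T|^{2}=T^*T$ and $|T^*|^{2}=TT^*$) to collapse the leftover norms to $\|T^*x\|^{2}\le\|T\|^{2}$ and $\|Tx\|^{2}\le\|T\|^{2}$. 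This bypasses the AM-GM step used in Theorem \ref{th4}, and is exactly where the clean $\tfrac12\|T\|^{2}$ summand in the statement enters (in contrast to the $\tfrac14\|T^*T+TT^*\|$ summand of Theorem \ref{th4}).

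Multiplying the two bracket bounds yields
\[
|\langle Tx,x\rangle|^{4}\le\Bigl(\tfrac12 w(T|T^*|)+\tfrac12\|T\|^{2}\Bigr)\Bigl(\tfrac12 w(T^*|T|)+\tfrac12\|T\|^{2}\Bigr),
\]
and taking the supremum over unit vectors $x$ followed by fourth roots gives the claim. The only step requiring real care, and the main obstacle in an otherwise mechanical argument parallel to Theorem \ref{th4}, is choosing the right pairing at the regrouping stage; once $|T^*|$ is matched with $T^*$ and $|T|$ with $T$, the adjoint identifications above, together with $\||T|x\|=\|Tx\|$ and $\||T^*|x\|=\|T^*x\|$, make the remainder of the estimate automatic.
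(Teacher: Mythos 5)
Your proof is correct and is precisely the ``similar technique'' the paper alludes to when it omits the proof of Theorem \ref{th5}: the repairing of $|T^*|$ with $T^*$ and $|T|$ with $T$ is the right move, and the identities $\||T|x\|=\|Tx\|$, $\||T^*|x\|=\|T^*x\|$ correctly explain why the residual term becomes $\tfrac12\|T\|^2$ rather than $\tfrac14\|T^*T+TT^*\|$. No gaps.
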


Clearly, the inequalities in Theorem \ref{th4} and Theorem \ref{th5} are improvements of the second inequality in \eqref{eqv}. The inequalities imply that when $w(T)=\|T\|$, then 
$$w(T|T|)=w(T^*|T^*|)  = w(T|T^*|)=  w(T^*|T|)=\|T\|^2.$$

Now we consider the following example to compare the inequalities in Theorem \ref{th4} and Theorem \ref{th5}.
 
\begin{example}
	Consider a matrix $T=\begin{bmatrix}
		0&1&0\\
		0&0&1\\
		0&0&0
	\end{bmatrix},$ then Theorem \ref{th4} gives $w(T)\leq \sqrt{ \frac{1}{2\sqrt{2}}+ \frac12 }$, whereas  Theorem \ref{th5}  gives $w(T)\leq \sqrt{ \frac{1}{4}+ \frac12}.$

Again, Consider $T=\begin{bmatrix}
	0&1&0\\
	0&0&2\\
	0&0&0
\end{bmatrix},$ then Theorem \ref{th4} gives $w(T)\leq \sqrt{ \frac{\sqrt{17}+5}{4} }$, whereas  Theorem \ref{th5}  gives $w(T)\leq \sqrt{ \frac{5+5}{4} }.$ 
Therefore, we would like to note that the inequalities obtained in Theorem \ref{th4} and Theorem \ref{th5} are not comparable, in general.
\end{example}

\section{\textbf{Numerical radius inequalities involving general powers}}

We develop a numerical radius inequality involving general powers  $w^n(T)$ and $w(T^n)$ for every positive integer $n\geq 2$ and from which we derive nice results related to the nilpotent operators and reverse power inequality for the numerical radius. First we prove the following theorem.



\begin{theorem}\label{th7}
	If $T\in \mathcal{B}(\mathcal{H}),$ then
	\begin{eqnarray*}
		|\langle Tx, x\rangle|^n &\leq& \frac{1}{2^{n-1}} \left|\langle T^nx, x\rangle \right|+ \sum_{k=1}^{n-1} \frac{1}{2^{k}} \left\|T^kx \right\| \left\|T^*x \right\|^{n-k},
	\end{eqnarray*}
for all $x\in \mathcal{H}$ with $\|x\|=1$ and for every positive integer $n\geq 2.$
\end{theorem}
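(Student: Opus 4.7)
The plan is to prove the inequality by induction on $n \ge 2$, using Lemma~\ref{buz-extension} as a peeling tool. The key preliminary observation is the elementary identity $|\langle Tx,x\rangle|=|\langle T^*x,x\rangle|$ for any $x\in\mathcal{H}$ (since $\langle T^*x,x\rangle=\overline{\langle Tx,x\rangle}$), which allows one to rewrite
$$|\langle Tx,x\rangle|^n \;=\; \bigl|\langle Tx,x\rangle\,\langle T^*x,x\rangle^{\,n-1}\bigr|,$$
a form on which Lemma~\ref{buz-extension} acts naturally with $e=x$, $x_1=Tx$, and $x_2=\cdots=x_n=T^*x$.

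The base case $n=2$ is immediate: the $n=2$ instance of Lemma~\ref{buz-extension} (which is exactly Buzano's inequality~\eqref{buzinq}) combined with $\langle Tx,T^*x\rangle=\langle T^2x,x\rangle$ produces the claimed bound.

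For the inductive step, assume the result for $n$ and deduce it for $n+1$. I iterate the lemma on $|\langle Tx,x\rangle\,\langle T^*x,x\rangle^{\,n}|$. A single application of Lemma~\ref{buz-extension} (with $n+1$ arguments) gives
$$|\langle Tx,x\rangle|^{n+1}\;\le\;\tfrac12\bigl|\langle T^2x,x\rangle\,\langle T^*x,x\rangle^{\,n-1}\bigr|+\tfrac12\|Tx\|\|T^*x\|^{n}.$$
At the $k$-th step ($1\le k\le n-1$), Lemma~\ref{buz-extension} with first two arguments $T^{k}x$ and $T^{*}x$ (and $e=x$) converts $\bigl|\langle T^kx,x\rangle\,\langle T^*x,x\rangle^{\,n+1-k}\bigr|$ into
$$\tfrac12\bigl|\langle T^{k+1}x,x\rangle\,\langle T^*x,x\rangle^{\,n-k}\bigr|+\tfrac12\|T^kx\|\|T^*x\|^{n+1-k},$$
using $\langle T^kx,T^*x\rangle=\langle T^{k+1}x,x\rangle$. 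A final Buzano step ($n=2$ case of the lemma) applied to the leftover $|\langle T^nx,x\rangle\,\langle T^*x,x\rangle|$ yields $\tfrac12|\langle T^{n+1}x,x\rangle|+\tfrac12\|T^nx\|\|T^*x\|$. Collecting these $n$ peelings, each weighted by an additional factor of $1/2$, gives exactly
$$|\langle Tx,x\rangle|^{n+1}\;\le\;\frac{1}{2^{n}}|\langle T^{n+1}x,x\rangle|+\sum_{k=1}^{n}\frac{1}{2^{k}}\|T^kx\|\|T^*x\|^{n+1-k},$$
which is the claim at $n+1$.

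The only real work is the bookkeeping: one must verify that the $k$-th peeling preserves the product form $|\langle T^kx,x\rangle\,\langle T^*x,x\rangle^{\ell}|$ with the shift $(k,\ell)\mapsto(k+1,\ell-1)$, and that the accumulated weights assemble into the geometric sequence $\{1/2^k\}$ with the residual $|\langle T^{n+1}x,x\rangle|$ carrying the expected coefficient $1/2^{n}$. There is no additional analytic input beyond repeated use of Lemma~\ref{buz-extension}; alternatively, one could forgo the induction frame altogether and simply iterate the lemma $n$ times on $|\langle Tx,x\rangle\,\langle T^*x,x\rangle^{\,n-1}|$, arriving at the same bookkeeping.
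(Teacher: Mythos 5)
Your argument is correct and is essentially the paper's own proof: both iterate Lemma~\ref{buz-extension} on $|\langle Tx,x\rangle\,\langle T^*x,x\rangle^{\,n-1}|$, peeling off one factor at a time via $\langle T^kx,T^*x\rangle=\langle T^{k+1}x,x\rangle$ and accumulating the error terms $\tfrac{1}{2^k}\|T^kx\|\|T^*x\|^{n-k}$. The induction wrapper you add is superfluous (your inductive step never invokes the hypothesis for $n$), as you yourself note, but the underlying iteration and bookkeeping match the paper exactly.
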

\begin{proof}
	We have
	\begin{eqnarray*}
	&&	|\langle Tx,x \rangle|^n \\
		&=&|\langle Tx,x \rangle \langle T^*x,x \rangle \langle T^*x,x \rangle^{n-2} |\\
		&\leq& \frac{\left|\langle Tx,T^*x \rangle \langle T^*x,x \rangle^{n-2}  \right|+ \|Tx\| \|T^*x\|^{n-1}  }{2} \,\, (\text{by Lemma \ref{buz-extension}})\\
		&=& \frac{\left|\langle T^2x,x \rangle \langle T^*x,x \rangle \langle T^*x,x \rangle^{n-3}  \right|+ \|Tx\| \|T^*x\|^{n-1}  }{2}\\
		&\leq & \frac{ \frac{\left|\langle T^2x,T^*x \rangle  \langle T^*x,x \rangle^{n-3}  \right|+ \|T^2x\| \|T^*x\|^{n-2} }{2}+ \|Tx\| \|T^*x\|^{n-1}  }{2}  \,\, (\text{by Lemma \ref{buz-extension}})\\
		&= & \frac{\left|\langle T^3x,x \rangle \langle T^*x,x \rangle \langle T^*x,x \rangle^{n-4}  \right|+ \|T^2x\| \|T^*x\|^{n-2} }{2^2}+\frac{  \|Tx\| \|T^*x\|^{n-1}  }{2} \\
		&\leq & \frac{\frac{\left|\langle T^3x,T^*x \rangle  \langle T^*x,x \rangle^{n-4}\right|+ \|T^3x\| \|T^*x\|^{n-3} }{2} + \|T^2x\| \|T^*x\|^{n-2} }{2^2} 
		 +\frac{  \|Tx\| \|T^*x\|^{n-1}  }{2} \\
		 && \,\,\,\,\,\,\,\,\,\,\,\,\,\,\,\,\,\,\, \,\,\,\,\,\,\,\,\,\,\,\,\,\,\,\,\,\,\,\,\,\,\,\,\,\,\,\,\,\,\,\,\,\,\,\,\,\,\,\,\,\,\,\,\,\,\,\,\,\,\,\,\,\,\,\,\,\,\,\,\,\,\,\,\,\,\,\,\,\,\,\,\,\,\,\,\,\,\,\,\,\,\,\,\,\,\,\,\,\,\,\,\,\,\,\,\,\,\,\,\,\,\,\,\,\,\,\,\,\,\, (\text{by Lemma \ref{buz-extension}})\\
		 &= &  \frac{\left|\langle T^4x,x \rangle  \langle T^*x,x \rangle  \langle T^*x,x \rangle^{n-5}\right|+ \|T^3x\| \|T^*x\|^{n-3} }{2^3}  + \frac{ \|T^2x\| \|T^*x\|^{n-2} }{2^2} \\
		  && \,\,\,\,\,\,\,\,\,\,\,\,\,\,\,\,\,\,\, \,\,\,\,\,\,\,\,\,\,\,\,\,\,\,\,\,\,\,\,\,\,\,\,\,\,\,\,\,\,\,\,\,\,\,\,\,\,\,\,\,\,\,\,\,\,\,\,\,\,\,\,\,\,\,\,\,\,\,\,\,\,\,\,\,\,\,\,\,\,\,\,\,\,\,\,\,\,\,\,\,\,\,\,\,\,\,\,\,\,\, +\frac{  \|Tx\| \|T^*x\|^{n-1}  }{2}.
		\end{eqnarray*}
	Repeating this approach $(n-1)$ times (i.e., using Lemma \ref{buz-extension}), we obtain
	\begin{eqnarray*}
		|\langle Tx, x\rangle|^n &\leq& \frac{1}{2^{n-1}} \left|\langle T^nx, x\rangle \right|+ \sum_{k=1}^{n-1} \frac{1}{2^{k}} \left\|T^kx \right\| \left\|T^*x \right\|^{n-k},
	\end{eqnarray*}
	as desired.
\end{proof}

The following generalized numerical radius inequality is a simple consequence of Theorem \ref{th7}.

\begin{cor}\label{cor5}
	If $T\in \mathcal{B}(\mathcal{H}),$ then
	\begin{eqnarray}\label{pp}
		w^n(T) &\leq& \frac{1}{2^{n-1}} w(T^n)+ \sum_{k=1}^{n-1} \frac{1}{2^{k}} \left\|T^k \right\| \left\|T \right\|^{n-k},
	\end{eqnarray}
for every positive integer $n\geq 2.$
\end{cor}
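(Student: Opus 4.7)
The plan is to derive Corollary \ref{cor5} as an essentially immediate consequence of Theorem \ref{th7}, which has already done all the heavy lifting. The strategy is simply to take a supremum over unit vectors on both sides of the pointwise inequality in Theorem \ref{th7}, after first replacing the vector-dependent norms on the right by their operator-norm bounds.

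Concretely, I would start from
\begin{equation*}
|\langle Tx, x\rangle|^n \leq \frac{1}{2^{n-1}} \left|\langle T^n x, x\rangle\right| + \sum_{k=1}^{n-1} \frac{1}{2^k} \|T^k x\| \, \|T^* x\|^{n-k},
\end{equation*}
valid for every unit vector $x \in \mathcal{H}$. For each $k$ in the sum I would use the elementary bounds $\|T^k x\| \leq \|T^k\|$ and $\|T^* x\| \leq \|T^*\| = \|T\|$ (where $\|x\|=1$), which together yield
\begin{equation*}
|\langle Tx, x\rangle|^n \leq \frac{1}{2^{n-1}} \left|\langle T^n x, x\rangle\right| + \sum_{k=1}^{n-1} \frac{1}{2^k} \|T^k\| \, \|T\|^{n-k}.
\end{equation*}

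Now I would take the supremum over all $x$ with $\|x\|=1$. The left-hand side becomes $w^n(T)$ by definition (since taking the supremum commutes with the monotone map $t \mapsto t^n$ on $[0,\infty)$), while the supremum of $|\langle T^n x, x\rangle|$ is exactly $w(T^n)$. The sum on the right is independent of $x$, so it is unaffected. This produces precisely
\begin{equation*}
w^n(T) \leq \frac{1}{2^{n-1}} w(T^n) + \sum_{k=1}^{n-1} \frac{1}{2^k} \|T^k\| \, \|T\|^{n-k},
\end{equation*}
as desired.

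There is essentially no obstacle here: the genuine work lies in Theorem \ref{th7}, whose inductive unfolding of Lemma \ref{buz-extension} produced the pointwise bound. Once that is in hand, the only subtle point to mention is that $\|T^*x\| \leq \|T\|$ (rather than $\|T^*\|$, which are equal anyway), so no additional ingredient beyond elementary operator-norm bounds and passage to the supremum is required.
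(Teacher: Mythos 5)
Your proposal is correct and matches the paper's intent exactly: the paper presents Corollary \ref{cor5} as ``a simple consequence of Theorem \ref{th7}'' with no further detail, and the intended argument is precisely the one you give — bound $\|T^kx\|\leq\|T^k\|$ and $\|T^*x\|\leq\|T\|$ for unit vectors $x$, then take the supremum over $\|x\|=1$.
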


\begin{remark}
(i)	For every $T\in \mathcal{B}(\mathcal{H})$ and for every positive integer $n\geq 2$, 
	\begin{eqnarray*}
		w^n(T) &\leq& \frac{1}{2^{n-1}} w(T^n)+ \sum_{k=1}^{n-1} \frac{1}{2^{k}} \left\|T^k \right\| \left\|T \right\|^{n-k} \\
		&\leq& \frac{1}{2^{n-1}} w(T^n)+ \sum_{k=1}^{n-1} \frac{1}{2^{k}}  \left\|T \right\|^{n} \\
		&\leq& \frac{1}{2^{n-1}} \|T^n\|+ \sum_{k=1}^{n-1} \frac{1}{2^{k}}  \left\|T \right\|^{n} \,\, \text{(by \eqref{eqv})}\\
		 &\leq& \frac{1}{2^{n-1}} \|T\|^n+ \sum_{k=1}^{n-1} \frac{1}{2^{k}}  \left\|T \right\|^{n} 
		= \|T\|^n. 
	\end{eqnarray*}
	Therefore, the inequality \eqref{pp} is an improvement of the second inequality in \eqref{eqv}.
	
\noindent (ii) From the above inequalities, it follows that if $w(T)=\|T\|$, then
	$$ w^n(T)=w(T^n)=\|T^n\|=\|T\|^n,$$
for every positive integer $n\geq 2.$ And so it is easy to see that when $w(T)=\|T\|$, then
 $$w(T)= \lim\limits_{n\to \infty} \|T^n\|^{1/n}= r(T),$$ where 
$r(T)$ denotes the spectral radius of $T.$ 
The second equality holds for every operator $T\in \mathcal{B}(\mathcal{H})$ and it is known as Gelfand formula for spectral radius.

\noindent (iii) Taking $n=2$ in Corollary \ref{cor5}, we get
$$ w^2(T) \leq \frac12 w(T^2)+ \frac12\|T\|^2,$$
which was  proved by Dragomir \cite{D08}.

\noindent (iv) Taking $n=2$ in Theorem \ref{th7}, we deduce that
$$ w^2(T) \leq \frac12 w(T^2)+ \frac14 \|T^*T+TT^*\|,$$
which was proved by Abu-Omar and  Kittaneh \cite{Abu_RMJM_2015}.
\end{remark}

Following Corollary \ref{cor5}, we obtain an estimation for the nilpotent operators.

\begin{cor}\label{nilpotent}
	Let $T\in \mathcal{B}(\mathcal{H}).$
	If $T^n=0$ for some least positive integer $n\geq 2$, then
\begin{eqnarray*}
	w(T) &\leq& \left(\sum_{k=1}^{n-1} \frac{1}{2^{k}} \left\|T^k \right\| \left\|T \right\|^{n-k}\right)^{1/n}
	\leq \left( 1- \frac{1}{2^{n-1}}\right)^{1/n} \|T\|.
\end{eqnarray*}
\end{cor}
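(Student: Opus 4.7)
The plan is to apply Corollary \ref{cor5} directly; the nilpotency hypothesis does all the real work by killing the leading term on the right-hand side of the power inequality \eqref{pp}. Specifically, since $T^n = 0$, we have $w(T^n) = 0$, so Corollary \ref{cor5} collapses to
$$w^n(T) \leq \sum_{k=1}^{n-1} \frac{1}{2^k} \|T^k\| \|T\|^{n-k}.$$
Extracting the $n$-th root of both sides immediately yields the first of the two claimed inequalities.

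For the second inequality, I would use submultiplicativity of the operator norm in the form $\|T^k\| \leq \|T\|^k$ to bound each summand. This lets me factor $\|T\|^n$ out of the sum and reduce to the truncated geometric series
$$\sum_{k=1}^{n-1} \frac{1}{2^k} = 1 - \frac{1}{2^{n-1}}.$$
Combining these two observations gives $\sum_{k=1}^{n-1} 2^{-k} \|T^k\| \|T\|^{n-k} \leq \left(1 - 2^{-(n-1)}\right) \|T\|^n$, and taking $n$-th roots finishes the chain.

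There is no genuine obstacle here: once Corollary \ref{cor5} is in hand, the argument is mechanical, and the only conceptual input is recognizing that the $\frac{1}{2^{n-1}} w(T^n)$ term vanishes under the nilpotency hypothesis. The role of the phrase \emph{least positive integer} in the hypothesis is simply to fix the exponent at which the nilpotency first occurs, so that the bound is as tight as possible; the proof itself does not use minimality, since the estimate \eqref{pp} is valid for every $n \geq 2$.
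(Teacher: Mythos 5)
Your proof is correct and matches the paper's intended argument: the corollary is stated as an immediate consequence of Corollary \ref{cor5}, obtained exactly as you describe by setting $w(T^n)=0$, bounding $\|T^k\|\leq\|T\|^k$, and summing the geometric series. Your remark that minimality of $n$ is not actually used in the derivation is also accurate.
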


Consider a matrix $T=\begin{bmatrix} 
	0&1&2\\
	0&0&3\\
	0&0&0
\end{bmatrix}.$ Then we see that Corollary \ref{nilpotent} gives $w(T)\leq \alpha \approx 3.0021$ and Theorem \ref{th1} gives $w(T) \leq \beta \approx 2.5546,$ whereas the inequality \eqref{haag} gives $w(T)\leq \gamma \approx 2.5811.$ Therefore, we conclude that for the nilpotent operators the inequality \eqref{haag} (given by Haagerup and Harpe) is not always better than the inequalities discussed here and vice versa.

Finally, by using Corollary \ref{cor5} we obtain a reverse power inequality for the numerical radius.

\begin{cor}
	Let $T\in \mathcal{B}(\mathcal{H})$. If $\|T\|\leq 1$, then 
	\begin{eqnarray*}   
		w^n(T) &\leq& \frac{1}{2^{n-1}} w(T^n)+  1- \frac{1}{2^{n-1}},
	\end{eqnarray*}
for every positive integer $n\geq 2.$
This inequality is sharp.
\end{cor}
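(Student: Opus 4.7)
The plan is to derive the stated inequality as a direct consequence of Corollary \ref{cor5}, using only the hypothesis $\|T\|\le 1$ to collapse the weighted norm sum into a geometric-series constant. I would begin by invoking Corollary \ref{cor5}, which already provides
\[
w^n(T) \;\le\; \frac{1}{2^{n-1}}\, w(T^n) + \sum_{k=1}^{n-1} \frac{1}{2^{k}}\, \|T^k\|\, \|T\|^{n-k}.
\]
This is the workhorse; the remaining task is purely to bound the tail sum.

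Next, I would observe that since $\|T\|\le 1$, the submultiplicativity of the operator norm gives $\|T^k\|\le \|T\|^k \le 1$ and $\|T\|^{n-k}\le 1$ for every $1\le k\le n-1$. Hence each summand $\|T^k\|\,\|T\|^{n-k}$ is at most $1$, and therefore
\[
\sum_{k=1}^{n-1} \frac{1}{2^{k}}\, \|T^k\|\, \|T\|^{n-k} \;\le\; \sum_{k=1}^{n-1} \frac{1}{2^{k}} \;=\; 1 - \frac{1}{2^{n-1}}.
\]
Substituting this estimate into the inequality from Corollary \ref{cor5} yields the desired bound
\[
w^n(T) \;\le\; \frac{1}{2^{n-1}}\, w(T^n) + \left(1 - \frac{1}{2^{n-1}}\right).
\]

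Finally, for sharpness I would exhibit a simple witness. Taking $T = I$ (or any normal operator of norm one, so that $\|T\|=w(T)=1$), one has $w^n(T) = 1$ and $w(T^n) = 1$, so the right-hand side equals $\tfrac{1}{2^{n-1}} + \bigl(1 - \tfrac{1}{2^{n-1}}\bigr) = 1$, matching the left-hand side. Thus equality is attained and the constants in the inequality cannot be improved.

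There is no serious obstacle here: once Corollary \ref{cor5} is in hand, the only step requiring thought is recognising that the hypothesis $\|T\|\le 1$ is precisely what allows the product $\|T^k\|\,\|T\|^{n-k}$ to be discarded uniformly in $k$, after which the partial geometric sum closes the argument.
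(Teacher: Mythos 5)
Your argument is correct and matches the paper's proof essentially verbatim: both invoke Corollary \ref{cor5}, use $\|T\|\le 1$ to bound each term $\|T^k\|\,\|T\|^{n-k}$ by $1$ and sum the geometric series, and both establish sharpness via a normal operator of norm one (for which $w(T^n)=w^n(T)=1$). No issues.
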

\begin{proof}
	From Corollary \ref{cor5}, we have
	\begin{eqnarray*}
		w^n(T) &\leq& \frac{1}{2^{n-1}} w(T^n)+ \sum_{k=1}^{n-1} \frac{1}{2^{k}} \left\|T^k \right\| \left\|T \right\|^{n-k}\\
		&\leq& \frac{1}{2^{n-1}} w(T^n)+ \sum_{k=1}^{n-1} \frac{1}{2^{k}}  \left\|T \right\|^{n}\\
		&\leq& \frac{1}{2^{n-1}} w(T^n)+ \sum_{k=1}^{n-1} \frac{1}{2^{k}}  \\
		&=& \frac{1}{2^{n-1}} w(T^n)+  1- \frac{1}{2^{n-1}}.
	\end{eqnarray*}
If $T^*T=TT^*$ and $\|T\|=1,$ then 
\begin{eqnarray*}
	w^n(T) &=& \frac{1}{2^{n-1}} w(T^n)+  1- \frac{1}{2^{n-1}}=1.
\end{eqnarray*}
\end{proof}

\smallskip
\smallskip


\noindent {\bf{Data availability statements.}}\\
Data sharing not applicable to this article as no datasets were generated or analysed during the current study.\\

\noindent {\bf{Declarations.}}\\
\noindent {\bf{Conflict of Interest.}} The author declares that there is no conflict of interest.

\bibliographystyle{amsplain}

\end{document}